\newtheorem{theorem}{Theorem}
\newtheorem{prop}[theorem]{Proposition}
\newtheorem{lem}[theorem]{Lemma}
\begin{document}

\title[Discrepancy estimate]{Discrepancy estimates related to the fractional parts of $b^n/n$.}
\author{\fnm{Martin} \sur{Lind}}
\email{martin.lind@kau.se}
\affil{\orgdiv{Dept. of Mathematics and Computer Science}, \orgname{Karlstad University},\orgaddress{ \street{Universitetsgatan 2}, \city{Karlstad}, \postcode{65188}, \country{Sweden}}}

\abstract{We prove a discrepancy estimate related to the sequence of fractional parts of $b^n/n$. This improves an earlier result of Cilleruelo et al.}

\keywords{fractional parts, discrepancy, uniform distribution}

\pacs[MSC Classification]{11K38, 11B05}

\maketitle

\section{Introduction}\label{sec1}

Let $b\in\mathbb{N}, b\ge2$. In 2013, Cilleruelo et al. \cite{CKLRS} proved that
\begin{equation}
    \label{Seq}
    \left\{\frac{b^n\pmod{n}}{n}:n\in\mathbb{N}\right\}
\end{equation}
is dense in $[0,1]$. (See also \cite{D2} for a number of interesting related results.) 

For $A\subset\mathbb{N}$, we set
\begin{equation}
    \nonumber
    \mathcal{S}_b(A)=\left\{\frac{b^n\pmod{n}}{n}:n\in A\right\}.
\end{equation}
Note in particular that the set (\ref{Seq}) is simply $\mathcal{S}_b(\mathbb{N})$. Let $\mathbb{P}=\{2,3,\ldots\}$ denote the prime numbers and set 
\begin{equation}
    \nonumber
    \mathcal{A}=\left\{pq:p,q\in\mathbb{P}, p>b^q\right\}.
\end{equation}
The main result of \cite{CKLRS} is an estimate of the discrepancy of $\mathcal{S}_b(\mathcal{A})$. Denote $\mathcal{A}_N=\mathcal{A}\cap[1,N]$, then
\begin{equation}
    \label{discrEstimate1}
    D(\mathcal{S}_b(\mathcal{A}_N))=\mathcal{O}\left(\frac{\log\log\log\log(N)}{\log\log\log(N)}\right).
\end{equation}
where $D(\mathcal{S}_b(\mathcal{A}_N))$ denotes the \emph{discrepancy} of $\mathcal{S}_b(\mathcal{A}_N)$
(see Section \ref{sec2} below).
In particular, it follows from (\ref{discrEstimate1}) that $\mathcal{S}_b(\mathcal{A})$ is uniformly distributed modulo 1 and this implies the density of $\mathcal{S}_b(\mathcal{A})$ in $[0,1]$.

Unaware of the work \cite{CKLRS}, the author studied properties of $\mathcal{S}_b(\mathcal{A})$ from a different point of view in \cite{M2}. When informed of the paper \cite{CKLRS}, we found that some observations from \cite{M2} could be used to improve on (\ref{discrEstimate1}). The main result of this note is the following.
\begin{theorem}
\label{discrepancyThm}
There holds
\begin{equation}
    \label{discrEstimate2}
    D(\mathcal{S}_b(\mathcal{A}_N))=\mathcal{O}\left(\frac{1}{\log\log\log(N)}\right).
\end{equation}
\end{theorem}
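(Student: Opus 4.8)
The plan is to combine the arithmetic structure of the integers $n=pq\in\mathcal A$ with the (averaged) principle that counting error is additive over a partition, thereby reducing the whole estimate to the equidistribution of a single residue attached to each $n$.

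First I would make the fractional parts explicit. For $n=pq\in\mathcal A$ (so $p,q$ prime and $p>b^q$) one has $p>b^q\ge 2^q>q$, hence $\gcd(p,q)=1$, and the Chinese Remainder Theorem together with Fermat's little theorem gives $b^{pq}\equiv(b^q)^p\equiv b^q\pmod p$ and $b^{pq}\equiv(b^p)^q\equiv b^p\pmod q$. Since $0<b^q<p$, writing $M=b^n\bmod n$ as $M=b^q+ps$ with $s\in\{0,\dots,q-1\}$ and using $b^q\equiv b\pmod q$ forces
\[
\frac{b^n\bmod n}{n}=\frac{s}{q}+\frac{b^q}{pq},\qquad s\equiv p^{-1}(b^p-b)\pmod q .
\]
Thus every point of $\mathcal S_b(\mathcal A_N)$ attached to a fixed $q$ lies in the cell $[\tfrac{s}{q},\tfrac{s+1}{q})$, its position inside the cell being the harmless quantity $b^q/p<1$.

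Next I would partition $\mathcal A_N=\bigsqcup_q\mathcal A_N^{(q)}$ according to the prime $q$ and use that, for any interval, the counting error of a union is at most the sum of the counting errors. Writing $n_q=|\mathcal A_N^{(q)}|$ and letting $d_q$ be the discrepancy of the $q$-th batch, this gives
\[
D(\mathcal S_b(\mathcal A_N))\le\frac{1}{|\mathcal A_N|}\sum_q d_q\,n_q .
\]
Two inputs feed this bound. For the denominator, the prime number theorem and Mertens' theorem yield $n_q\asymp\frac{N}{q\log N}$ and hence $|\mathcal A_N|\asymp\frac{N}{\log N}\sum_{q\le\log_b N}\frac1q\asymp\frac{N}{\log N}\log\log\log N$; this is precisely where the triple logarithm is born. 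For the numerator, the cell description shows that the within-cell coordinate $b^q/p$ perturbs the distribution only at scale $1/q$, so that $d_q\lesssim\delta_q+\tfrac1q$, where $\delta_q$ is the discrepancy of the residues $s\equiv p^{-1}(b^p-b)\pmod q$ as $p$ ranges over the primes of the batch.

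The heart of the matter, and the step I expect to be hardest, is the sharp bound $\delta_q\lesssim 1/\phi(\mathrm{ord}_q b)$. As $p$ runs over the relevant primes, $b^p\bmod q$ depends only on $p\bmod\mathrm{ord}_q(b)$ while $p^{-1}\bmod q$ depends on $p\bmod q$; since $\mathrm{ord}_q(b)\mid q-1$ these moduli are coprime, so by Dirichlet's theorem (quantitatively, Siegel--Walfisz) the pair $(p\bmod q,\,b^p\bmod q)$ is jointly equidistributed. For every value $b^p\not\equiv b$ the factor $p^{-1}$ then spreads $s$ uniformly over the nonzero residues, whereas the single exceptional case $b^p\equiv b$, i.e. $p\equiv1\pmod{\mathrm{ord}_q b}$, piles an excess of mass $\approx 1/\phi(\mathrm{ord}_q b)$ onto $s=0$. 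This is the one genuine bias, and controlling it \emph{uniformly in $q$} is the delicate point: the Siegel--Walfisz error is admissible only when the batch contains enough primes, i.e. when $N/q$ exceeds $b^q$ by a power of $\log N$, so the ``thin'' batches with $q$ near $\log_b N$ must be set aside and estimated by the trivial $d_q\le1$. A separate check shows these carry at most $O(N/\log N)$ points altogether and therefore contribute only $O(1/\log\log\log N)$.

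Finally I would sum. Using $\mathrm{ord}_q(b)\ge\log q/\log b$ together with $\phi(d)\gg d/\log\log d$ gives $\frac{1}{q\,\phi(\mathrm{ord}_q b)}\lesssim\frac{\log\log q}{q\log q}$, so that $\sum_q\frac{d_q}{q}\lesssim\sum_q\frac{1}{q\,\phi(\mathrm{ord}_q b)}+\sum_q\frac1{q^2}=O(1)$. Dividing by $|\mathcal A_N|\asymp\frac{N}{\log N}\log\log\log N$ yields $D(\mathcal S_b(\mathcal A_N))=O(1/\log\log\log N)$. The entire improvement over \cite{CKLRS} is located in the sharp per-batch bound $\delta_q\lesssim1/\phi(\mathrm{ord}_q b)$: the saving of the factor $\log\log\log\log N$ corresponds exactly to upgrading a cruder estimate (for which $\sum_q d_q/q$ would grow like $\log\log\log\log N$) to one that is summable against $1/q$. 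I would also note that the contribution of the batch $q=2$ already shows this rate to be best possible for this method.
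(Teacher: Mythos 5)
Your proposal is correct and follows essentially the same route as the paper: partition $\mathcal{A}_N$ according to the prime $q$, apply the triangle inequality for star discrepancy (Lemma \ref{triangleIneq}), show that within each batch the residues $s$ equidistribute via Siegel--Walfisz except for a single bias at $s=0$ of size $1/\varphi(\mathrm{ord}_q(b))\le C\log\log(q)/\log(q)$, and sum against $n(q,N)\lesssim N/(q\log N)$ and $|\mathcal{A}_N|\sim N\log\log\log(N)/\log(N)$. Your explicit CRT/Fermat derivation and your bias count are exactly the paper's inputs from \cite{M2} (the estimate (\ref{pqEstimate}) and Proposition \ref{countProp}, noting $m_b(q)/\varphi(q-1)=1/\varphi(\mathrm{ord}_q(b))$); the only differences are bookkeeping (you work modulo $q\cdot\mathrm{ord}_q(b)$ instead of $q(q-1)$ and discard ``thin'' batches trivially, where the paper instead takes $A=4$ in the Siegel--Walfisz bound uniformly over all nonempty batches).
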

The improvement (\ref{discrEstimate2}) is not due to any sharper number-theoretic inequalities. In fact, we use the same estimates as in \cite{CKLRS}. Rather, we employ a different strategy to estimate the discrepancy. Instead of using the Erd\H{o}s-Tur\'{a}n inequality and exponential sums as in \cite{CKLRS}, we use a sort of triangle inequality (Lemma \ref{triangleIneq}) to decompose $\mathcal{S}_b(\mathcal{A}_N)$ into well-structured subsequences. By combining a number of basic facts about discrepancy with some observations from \cite{M2} (Proposition \ref{countProp} in particular) and a variant of the Siegel-Walfisz theorem, we obtain in Lemma \ref{discrepancyLemma} an estimate of the discrepancy of each subsequence and these estimates allow us to establish Theorem \ref{discrepancyThm}.
In connection with this, we mention our previous work \cite{M1} where a similar strategy based on Lemma \ref{triangleIneq} was used to find optimal discrepancy decay rates.

\section{Auxiliary results}\label{sec2}
\subsection{Discrepancy}

For a finite set $A$, we denote by $|A|$ the cardinality of $A$.
Let $S=\{x_1,x_2,\ldots,x_M\}\subset[0,1]$ be a finite sequence. The \emph{extreme discrepancy} of $S$ is defined by 
\begin{equation}
    \nonumber
    D(S)=\sup_{J\subseteq[0,1]}\left|\frac{A_S(J)}{M}-\lambda(J)\right|,
\end{equation}
where $A_S(J)=|\{n:x_n\in J\}|$ and $\lambda$ is the linear Lebesgue measure. Similarly, the \emph{star discrepancy} of $S$ is defined by 
\begin{equation}
    \nonumber
    D^*(S)=\sup_{r>0}\left|\frac{A_S([0,r])}{M}-r\right|.
\end{equation}
It is well-known (see e.g. \cite{DP}, Chapter 3) that
\begin{equation}
    \nonumber
    D^*(S)\le D(S)\le 2D^*(S),
\end{equation}
hence it is sufficient to only consider $D^*(S)$.

\begin{lem}
\label{discreteLemma}
    Let $R\in\mathbb{N}$ and $S=\{x_1,x_2,\ldots,x_M\}$ be a finite sequence such that the elements of $S$ only attain values in the set $\{k/R: k=0,1,\ldots, R-1\}$. Assume that 
    \begin{equation}
        \nonumber
        |\{n:x_n=k/R\}|=\alpha_kM+\epsilon_k
    \end{equation}
    where $\alpha_k\ge0~~(k=0,1,\ldots,R-1)$,  $\alpha_1=\alpha_2=\ldots=\alpha_{R-1}$ and
    \begin{equation}
        \nonumber
        \sum_{k=0}^{R-1}\alpha_k=1.
    \end{equation}
    Then there exists an absolute constant $C>0$ such that
    \begin{equation}
        \nonumber
        MD^*(S)\le\max\left\{\alpha_0M,\frac{M}{R}\right\}+\sum_{k=0}^{R-1}|\epsilon_k|.
    \end{equation}
\end{lem}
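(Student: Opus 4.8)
The plan is to work directly with the star discrepancy $D^*(S)$, which suffices by the stated chain $D^*(S)\le D(S)\le 2D^*(S)$. Since every point of $S$ lies on the grid $\{k/R\}$, for any $r\in(0,1]$ I would put $m=\lfloor Rr\rfloor$ and split the counting function. Writing $F(m)=\sum_{k=0}^{m}\alpha_k$ for the target cumulative mass, the hypothesis on the counts gives
\begin{equation}
    \nonumber
    \frac{A_S([0,r])}{M}-r=\bigl(F(m)-r\bigr)+\frac{1}{M}\sum_{k=0}^{m}\epsilon_k.
\end{equation}
The error term is bounded in absolute value by $\frac{1}{M}\sum_{k=0}^{R-1}|\epsilon_k|$ uniformly in $r$, which after multiplying through by $M$ produces exactly the $\sum_{k=0}^{R-1}|\epsilon_k|$ appearing in the claim. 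Thus the problem reduces to showing $\sup_{r>0}|F(\lfloor Rr\rfloor)-r|\le\max\{\alpha_0,1/R\}$.

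To handle this main term I would exploit the assumption $\alpha_1=\cdots=\alpha_{R-1}=:\alpha$, which together with $\sum_k\alpha_k=1$ forces $\alpha=(1-\alpha_0)/(R-1)$ and makes $F$ affine, namely $F(m)=\alpha_0+m\alpha$ for all $m\ge0$, with $F(R-1)=1$. On each grid interval $[m/R,(m+1)/R)$ the deviation $F(m)-r$ is affine and decreasing in $r$, so the supremum of its absolute value is attained at the endpoints. Hence $\sup_{r>0}|F(\lfloor Rr\rfloor)-r|$ equals the maximum over $m\in\{0,\ldots,R-1\}$ of $\max\{|F(m)-m/R|,\,|F(m)-(m+1)/R|\}$.

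The crux is then a short monotonicity computation, and I expect this to be the only real obstacle. Everything is governed by the sign of $\alpha-1/R$, equivalently of $1-R\alpha_0$, so I would split into two cases. If $\alpha_0\ge1/R$ then $\alpha\le1/R$, so $F(m)-m/R$ decreases from $\alpha_0$ (at $m=0$) down to $1/R$ (at $m=R-1$) while $F(m)-(m+1)/R$ stays in $[0,\alpha_0-1/R]$; the overall maximum is $\alpha_0$. If $\alpha_0<1/R$ then $\alpha>1/R$, so $F(m)-m/R$ increases from $\alpha_0$ to $1/R$ and $F(m)-(m+1)/R$ lies in $[\alpha_0-1/R,0]$; the overall maximum is $1/R$. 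In both cases the supremum equals $\max\{\alpha_0,1/R\}$. Multiplying by $M$ and adding back the uniformly bounded error term yields $MD^*(S)\le\max\{\alpha_0 M,M/R\}+\sum_{k=0}^{R-1}|\epsilon_k|$, i.e. the claimed inequality (with absolute constant $C=1$). The only care needed is the bookkeeping at the boundary, ensuring that $r\to0^+$ genuinely contributes the value $\alpha_0$ and that $r$ near $1$ contributes nothing, but this is routine once the affine structure of $F$ is in hand.
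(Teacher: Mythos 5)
Your proof is correct and follows essentially the same route as the paper: split $A_S([0,r])-Mr$ into a main term governed by the cumulative $\alpha$-mass plus an error bounded by $\sum_k|\epsilon_k|$, then use the fact that $\alpha_1=\cdots=\alpha_{R-1}$ makes the cumulative deviation monotone, so its extremes occur at $j=0$ and $j=R-1$, yielding $\max\{\alpha_0 M, M/R\}$. Your explicit two-case analysis on the sign of $\alpha-1/R$, including the right-endpoint values $|F(m)-(m+1)/R|$, is in fact slightly more careful bookkeeping than the paper's compressed "terms have the same sign" argument, but it is the same underlying idea.
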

\begin{proof}
   Take any $J_r=[0,r]$ and let $j=\lfloor rR\rfloor$, so that $M\mu(J_r)=jM/R+M\delta$ for some $\delta\in[0,1/R]$. We have
   \begin{equation}
       \nonumber
        A_S(J_r)=M\sum_{k=0}^j\alpha_k+\sum_{k=0}^j\epsilon_k
   \end{equation}
    so
   \begin{equation}
       \nonumber
        \left|A_S(J_r)-M\mu(J_r)\right|\le\left|M\alpha_0+M\sum_{k=1}^j\left(\alpha_k-\frac{1}{R}\right)-M\delta\right|+\sum_{k=1}^j|\epsilon_k|
   \end{equation}
    The first term of the expression at the right-hand side above is either increasing or decreasing in $\delta$, hence we have
   \begin{equation}
       \nonumber
        MD^*(S)\le\max_{j=0,\ldots, R-1}\left|M\alpha_0+M\sum_{k=1}^j\left(\alpha_k-\frac{1}{R}\right)\right|+\sum_{j=0}^{R-1}|\epsilon_k|
   \end{equation}
    where $j=0$ means that the sum is 0. The maximum of the first term is attained either at $k=0$ or $k=R-1$, since the terms of the sum have the same sign (due to the fact that $\alpha_1=\ldots=\alpha_{R-1}$. Further, $\alpha_0+(R-1)\alpha_1=1$ so
    \begin{eqnarray}
        \nonumber
        \max_{j=0,\ldots, R-1}\left|M\alpha_0+M\sum_{k=1}^j\left(\alpha_k-\frac{1}{R}\right)\right|&=&\max\left\{M\alpha_0,\left|M\alpha_0+M\sum_{k=1}^{R-1}\left(\alpha_k-\frac{1}{R}\right)\right|\right\}\\
        \nonumber
        &=&\max\left\{M\alpha_0,\left|M-\frac{M(R-1)}{R}\right|\right\}\\
        \nonumber
        &=&\max\left\{M\alpha_0,\frac{M}{R}\right\}
    \end{eqnarray}
    Consequently,
    $$
    MD^*(S)\le\max\left\{M\alpha_0,\frac{M}{R}\right\}+\sum_{k=0}^{R-1}|\epsilon_k|.
    $$
\end{proof}

\begin{lem}[\cite{DP}, Chapter 3]
    \label{triangleIneq}
    Assume that $S=\cup_j S_j$ where $S_i\cap S_j=\emptyset$. Denote $M_j=|S_j|$ and $M=|S|=\sum_jM_j$. Then
    \begin{equation}
        \nonumber
        MD^*(S)\le\sum_{j=1}^KM_jD^*(S_j).
    \end{equation}
\end{lem}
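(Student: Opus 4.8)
The plan is to exploit the additivity of the counting function over the disjoint pieces, which is what makes the ``triangle inequality'' work. Fix an arbitrary $r>0$. Since $S=\cup_{j=1}^K S_j$ is a disjoint union, every point of $S$ lying in $[0,r]$ belongs to exactly one $S_j$, so the counts add: $A_S([0,r])=\sum_{j=1}^K A_{S_j}([0,r])$. Combined with $M=\sum_{j=1}^K M_j$, which gives $Mr=\sum_{j=1}^K M_j r$, I obtain for every fixed $r$ the pointwise identity
\begin{equation}
    \nonumber
    A_S([0,r])-Mr=\sum_{j=1}^K\bigl(A_{S_j}([0,r])-M_j r\bigr).
\end{equation}

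Next I would apply the ordinary triangle inequality to this identity and bound each summand by the corresponding star discrepancy. For the fixed $r$ above, $|A_{S_j}([0,r])-M_j r|=M_j\bigl|A_{S_j}([0,r])/M_j-r\bigr|\le M_j D^*(S_j)$, because $D^*(S_j)$ is a supremum over all $r>0$ and hence dominates the value of the bracketed quantity at this particular $r$. Summing over $j$ yields $|A_S([0,r])-Mr|\le\sum_{j=1}^K M_j D^*(S_j)$, and crucially this bound now holds for \emph{every} $r>0$.

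The one point requiring care --- and the closest thing to an obstacle in an otherwise routine argument --- is the order of the two passages to the supremum. The estimate $|A_{S_j}([0,r])-M_j r|\le M_j D^*(S_j)$ must be invoked while $r$ is still held fixed and common to all pieces; one cannot first take suprema inside the sum with piece-dependent maximizers. Only after the uniform-in-$r$ bound is in hand do I take the supremum over $r>0$ on the left-hand side. Using the definition $MD^*(S)=\sup_{r>0}|A_S([0,r])-Mr|$, this gives $MD^*(S)\le\sum_{j=1}^K M_j D^*(S_j)$, as claimed.
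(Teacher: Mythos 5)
Your proof is correct: the additivity of the counting function over the disjoint pieces, the pointwise triangle inequality at a fixed $r$, and taking the supremum over $r$ only at the end is exactly the standard argument. The paper itself gives no proof of this lemma --- it is quoted from \cite{DP}, Chapter 3 --- and your argument is precisely the textbook one, so there is nothing to add.
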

\begin{lem} [\cite{NW}, Chapter 4]
    \label{continuity}
    Let $S'=\{x_1,x_2,\ldots,x_M\}$ and $S''=\{y_1,y_2,\ldots,y_M\}$ such that
    \begin{equation}
        \nonumber
        |x_j-y_j|\le\epsilon
    \end{equation}   
    for $j=1,2,\ldots,M$. Then
    \begin{equation}
        \nonumber
        |D^*(S')-D^*(S'')|<\epsilon.
    \end{equation}
\end{lem}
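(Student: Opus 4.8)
The plan is to compare the two star discrepancies by controlling how the counting functions $A_{S'}([0,r])$ and $A_{S''}([0,r])$ relate under the pointwise perturbation bound. First I would record the elementary \emph{sandwich inequality}: since $|x_j-y_j|\le\epsilon$ for every $j$, any point with $x_j\in[0,r]$ satisfies $y_j\le x_j+\epsilon\le r+\epsilon$, while $y_j\in[0,r-\epsilon]$ forces $x_j\le y_j+\epsilon\le r$. Hence
\begin{equation}
    \nonumber
    A_{S''}([0,r-\epsilon])\le A_{S'}([0,r])\le A_{S''}([0,r+\epsilon])
\end{equation}
for every $r$. This is the structural heart of the argument: it converts a perturbation of the \emph{points} into a shift of the \emph{test interval}, where the known discrepancy $D^*(S'')$ can be applied.

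Next I would fix $r$ and bound $A_{S'}([0,r])/M-r$ from both sides. The right-hand inequality above gives
\begin{equation}
    \nonumber
    \frac{A_{S'}([0,r])}{M}-r\le\left(\frac{A_{S''}([0,r+\epsilon])}{M}-(r+\epsilon)\right)+\epsilon\le D^*(S'')+\epsilon,
\end{equation}
and symmetrically the left-hand inequality yields $A_{S'}([0,r])/M-r\ge -D^*(S'')-\epsilon$. Taking the supremum over $r$ produces $D^*(S')\le D^*(S'')+\epsilon$, and since the hypothesis is symmetric in $S'$ and $S''$, interchanging their roles gives the reverse bound, hence $|D^*(S')-D^*(S'')|\le\epsilon$.

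The delicate point — and the main obstacle — is twofold. The first issue is the behaviour at the endpoints, where the shifted argument $r\pm\epsilon$ leaves $[0,1]$ and is no longer directly governed by $D^*(S'')$. Here I would use that $S',S''\subset[0,1]$: when $r+\epsilon>1$ one has $A_{S''}([0,r+\epsilon])=M$, so the quantity reduces to $1-r<\epsilon$, and when $r-\epsilon<0$ one has $A_{S''}([0,r-\epsilon])=0$, so it exceeds $-r>-\epsilon$; in both degenerate cases the bound $D^*(S'')+\epsilon$ persists. The second, more subtle issue is extracting the strict inequality claimed in the statement rather than merely $\le\epsilon$: since $S'$ and $S''$ are finite, the supremum defining $D^*$ is a maximum attained at some $r_0$, and tracking the corresponding sandwich inequality shows the strict gap is realized except in the boundary configurations just discussed, so the strictness rests on this finer bookkeeping at the extremizing $r_0$ (and is immediate under a strict perturbation hypothesis $|x_j-y_j|<\epsilon$).
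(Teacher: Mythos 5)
The paper offers no proof of this lemma to compare against: it is quoted from \cite{NW}, Chapter 4, as a known result. So your proposal must be judged on its own merits, and on those merits the core of it is correct and is in fact the standard textbook argument: the sandwich inequality $A_{S''}([0,r-\epsilon])\le A_{S'}([0,r])\le A_{S''}([0,r+\epsilon])$ converts the pointwise perturbation into a shift of the test interval, the signed deviation at each $r$ is then bounded by $D^*(S'')+\epsilon$, the degenerate cases $r+\epsilon>1$ and $r-\epsilon<0$ are handled exactly as you do, and symmetry in $S',S''$ finishes the argument. This cleanly establishes $|D^*(S')-D^*(S'')|\le\epsilon$.

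The one real defect is your treatment of the strict inequality. Your instinct that strictness is delicate is right, but your proposed remedy (tracking the extremizing $r_0$ and claiming the strict gap ``is realized except in the boundary configurations'') cannot be made to work, because under the hypothesis $|x_j-y_j|\le\epsilon$ the strict conclusion is simply false. Take $M=1$, $S'=\{0\}$, $S''=\{\epsilon\}$ with $0<\epsilon\le 1/2$: then $D^*(S')=1$, $D^*(S'')=1-\epsilon$, and $|D^*(S')-D^*(S'')|=\epsilon$ exactly. So the lemma as stated in the paper is a (harmless) overstatement of the standard result, whose correct form has $\le$ in the conclusion. The right way out is precisely your closing remark: since the sequences are finite, a strict perturbation hypothesis gives $\max_j|x_j-y_j|<\epsilon$, and applying the non-strict lemma with this maximum as the perturbation parameter yields the strict conclusion. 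That is all the paper ever uses: in the proof of Lemma \ref{discrepancyLemma}, the bound (\ref{pqEstimate}) is strict, so the strict inequalities in (\ref{mainLemmaEq1}) follow from the $\le$ version of the lemma. You should therefore either weaken the conclusion of your proof to $\le\epsilon$ (matching what you actually prove), or strengthen the hypothesis to $|x_j-y_j|<\epsilon$; as written, the last paragraph promises a strictness argument that does not exist.
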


\subsection{Primes in arithmetic progressions}
Denote
\begin{equation}
    \nonumber
    Z_k=\{r\in\mathbb{Z}^*_{q(q-1)}: b^r\equiv kr+b\pmod{q}\}.
\end{equation}
For any $p\in\mathbb{P}$ with $p>b^q$ and $p\equiv r\pmod{q(q-1)}$ for some $r\in Z_k$, there holds
\begin{equation}
    \label{pqEstimate}
    \left|\frac{b^{pq}\pmod{pq}}{pq}-\frac{k}{q}\right|<\frac{1}{q},
\end{equation}
see \cite{M2}. 

Let ${\rm ord}_q(b)=|\langle b\rangle|$ where $\langle b\rangle$ is the subgroup of $\mathbb{Z}^*_q$ generated by $b$.
In \cite{M2}, we proved the following proposition.
\begin{prop}
\label{countProp}
For $q\in\mathbb{P}$ there holds
    \begin{equation}
        \nonumber
        |Z_k|=\varphi(q-1)-m_b(q)\quad(k=1,2,\ldots,q-1),
    \end{equation}
    and
    \begin{equation}
        \nonumber
        |Z_0|=(q-1)m_b(q)
    \end{equation}
    where
    \begin{equation}
        \nonumber
        m_b(q)=|\{r\in\mathbb{Z}^*_{q-1}: r\equiv 1\pmod{{\rm ord}_q(b)}\}|.
    \end{equation}
\end{prop}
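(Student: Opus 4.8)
The plan is to exploit the coprimality $\gcd(q,q-1)=1$, which via the Chinese Remainder Theorem yields a ring isomorphism $\mathbb{Z}_{q(q-1)}\cong\mathbb{Z}_q\times\mathbb{Z}_{q-1}$ and hence a bijection of unit groups
\begin{equation}
\nonumber
\mathbb{Z}^*_{q(q-1)}\cong\mathbb{Z}^*_q\times\mathbb{Z}^*_{q-1},\qquad r\longmapsto(a,c):=(r\bmod q,\ r\bmod(q-1)).
\end{equation}
Under this correspondence I would rewrite the defining congruence of $Z_k$ as a condition on the pair $(a,c)$ and count the admissible pairs directly. Throughout I assume $q\nmid b$, which is implicit in the statement since $\mathrm{ord}_q(b)$ appears.

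The key observation is that the two sides of $b^r\equiv kr+b\pmod q$ each depend on only one coordinate. Because $\mathrm{ord}_q(b)$ divides $q-1$ by Fermat's little theorem, $b^r\pmod q$ depends only on $r\bmod(q-1)$, so $b^r\equiv b^c\pmod q$; on the other hand $kr+b\equiv ka+b\pmod q$ depends only on $r\bmod q=a$. Hence the defining condition becomes the decoupled relation
\begin{equation}
\nonumber
b^c\equiv ka+b\pmod q,\qquad(a,c)\in\mathbb{Z}^*_q\times\mathbb{Z}^*_{q-1},
\end{equation}
and $|Z_k|$ equals the number of such pairs.

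For $k\in\{1,\ldots,q-1\}$ the coefficient $k$ is invertible modulo $q$, so for each fixed $c$ the relation determines $a\equiv k^{-1}(b^c-b)\pmod q$ uniquely. This $a$ lies in $\mathbb{Z}^*_q$ precisely when $a\not\equiv0$, i.e. when $b^c\not\equiv b$, equivalently $b^{c-1}\not\equiv1\pmod q$, equivalently $c\not\equiv1\pmod{\mathrm{ord}_q(b)}$. The number of $c\in\mathbb{Z}^*_{q-1}$ violating this is exactly $m_b(q)$, so the count of admissible $c$ (each contributing one $a$) is $\varphi(q-1)-m_b(q)$, giving the first formula. For $k=0$ the relation reduces to $b^c\equiv b$, i.e. $c\equiv1\pmod{\mathrm{ord}_q(b)}$, independently of $a$; there are $m_b(q)$ such $c$ and each pairs with all $q-1$ elements of $\mathbb{Z}^*_q$, yielding $|Z_0|=(q-1)m_b(q)$.

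The argument is essentially bookkeeping once the decoupling is in place, so I anticipate no serious obstacle. The one point requiring care is the reduction $b^r\equiv b^c\pmod q$: it relies on $\mathrm{ord}_q(b)\mid(q-1)$ so that reducing the exponent modulo $q-1$ (rather than modulo $\mathrm{ord}_q(b)$ directly) is legitimate, and on $c=r\bmod(q-1)$ being a genuine unit, in particular nonzero for $q\ge3$. I would also verify the identification $a=0\iff c\equiv1\pmod{\mathrm{ord}_q(b)}$ with care, since this is exactly the bridge that produces the quantity $m_b(q)$ in both formulas.
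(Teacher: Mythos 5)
Your proof is correct. There is, however, nothing in this paper to compare it against: Proposition \ref{countProp} is stated here without proof, being imported from the author's earlier work \cite{M2}. Taken on its own merits, your argument is complete and is the natural one. The CRT identification $r\mapsto(a,c)\in\mathbb{Z}^*_q\times\mathbb{Z}^*_{q-1}$ genuinely decouples the congruence, since $b^r\equiv b^c\pmod q$ (legitimate because $\mathrm{ord}_q(b)\mid q-1$, which requires $q\nmid b$, correctly flagged as implicit in the statement) while $kr+b\equiv ka+b\pmod q$. You also handle the one delicate point properly: for $k\in\{1,\ldots,q-1\}$ the solved value $a\equiv k^{-1}(b^c-b)\pmod q$ must additionally be a \emph{unit}, and the excluded residues $c$ are precisely those with $c\equiv1\pmod{\mathrm{ord}_q(b)}$, i.e. exactly the $m_b(q)$ residues counted in the definition; for $k=0$ the condition on $a$ disappears, giving the factor $q-1$. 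A useful sanity check, which your formulas pass, is that the counts must sum to $\varphi(q(q-1))$: indeed $(q-1)\bigl(\varphi(q-1)-m_b(q)\bigr)+(q-1)m_b(q)=(q-1)\varphi(q-1)=\varphi(q(q-1))$, consistent with the fact that every $r\in\mathbb{Z}^*_{q(q-1)}$ lies in exactly one $Z_k$ (since $r$ is invertible mod $q$, the value $k\equiv(b^r-b)r^{-1}\pmod q$ is uniquely determined).
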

We shall need to estimate the number of primes in certain arithmetic progressions. Denote
$$
\pi(N;q(q-1),r)=|\{p\in\mathbb{P}: p\le N, p\equiv r\pmod{q(q-1)}\}.
$$
We use the following consequence of the Siegel-Walfisz theorem (see \cite{CKLRS} and the reference given there):
\begin{equation}
    \label{siegel}
    \pi(N;q(q-1),r)=\frac{\pi(N)}{\varphi(q(q-1))}+\mathcal{O}\left(\frac{N}{(\log(N))^A}\right)
\end{equation}
for any $A>0$ and $N\ge2$. (The implied constant in (\ref{siegel}) depends on $A$.) Here, $\pi(N)$ is the prime counting function. In particular, there is asymptotically the same amount of primes in the progression $r+nq(q-1)$ for each $r\in\mathbb{Z}^*_{q(q-1)}$. More precisely, (\ref{siegel}) and Proposition \ref{countProp} imply that
\begin{align}
    \nonumber
    |\{p\in\mathbb{P}:p\le N, \exists r\in Z_k\text{ such that } p\equiv r\pmod{q(q-1)}\}|= \\
    \nonumber
    =\frac{|Z_k|}{\varphi(q(q-1))}\pi(N)+\mathcal{O}\left(\frac{N|Z_k|}{(\log(N))^A}\right)
\end{align}

\section{Proof of Theorem \ref{discrepancyThm}}\label{sec3}

Denote 
\begin{equation}
    \nonumber
    F_{q,N}=\{p\in\mathbb{P}:b^q<p\le N/q\}\quad\text{and}\quad \mathcal{F}_{q,N}=\{pq:p\in F_{q,N}\},
\end{equation}
then
    \begin{equation}
        \nonumber
        \mathcal{A}_N=\bigcup_{q\in\mathbb{P}}\mathcal{F}_{q,N}.
    \end{equation}
(Note that $F_{q,N}=\emptyset$ for $q$ sufficiently large.)
Define
\begin{equation}
    \nonumber
    F_k=\{p\in F_{q,N}: \exists r\in Z_k\text{ such that } p\equiv r\pmod{q(q-1)}\}.
\end{equation}
for $k=0,1,\ldots,q-1$.

\begin{lem}
    \label{numberLemma}
    For each $k\in\{0,1,\ldots,q-1\}$, there holds
    \begin{equation}
        \nonumber
        |F_k|=\frac{|Z_k|}{\varphi(q(q-1))}|F_{q,N}|+\epsilon_k
    \end{equation}
    where 
    \begin{equation}
        \nonumber
        \sum_{k=0}^{q-1}|\epsilon_k|\le \frac{CN}{q^2\log(N)}.
    \end{equation}
\end{lem}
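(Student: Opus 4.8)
The plan is to write $|F_k|$ as a sum of prime counts over the residue classes making up $Z_k$, apply the Siegel--Walfisz estimate (\ref{siegel}) to each class, and then tame the accumulated error by using that a nonempty $F_{q,N}$ forces $q$ to be at most logarithmic in $N$.

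First I would note that every $p\in F_{q,N}$ satisfies $p>b^q\ge 2^q>q$, so $p$ is coprime to $q(q-1)$ and thus lies in a unique reduced residue class $r\in\mathbb{Z}^*_{q(q-1)}$. For each such $r$ the congruence $b^r\equiv kr+b\pmod q$ determines $k$ uniquely, because $r$ is invertible modulo $q$; hence the sets $Z_0,\ldots,Z_{q-1}$ partition $\mathbb{Z}^*_{q(q-1)}$ and $\sum_{k}|Z_k|=\varphi(q(q-1))$. Since $F_k$ consists precisely of those $p\in F_{q,N}$ whose class modulo $q(q-1)$ lies in $Z_k$, and the classes in $Z_k$ are disjoint, I can write
\[
|F_k|=\sum_{r\in Z_k}\bigl(\pi(N/q;q(q-1),r)-\pi(b^q;q(q-1),r)\bigr).
\]

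Next I would apply (\ref{siegel}) to both counts with a parameter $A$ to be chosen later. Because $|F_{q,N}|=\pi(N/q)-\pi(b^q)$, the main terms collapse to $|Z_k|\,|F_{q,N}|/\varphi(q(q-1))$, which is the asserted leading term, and $\epsilon_k$ equals the sum over $r\in Z_k$ of the corresponding remainders $E_r$. Since $x\mapsto x/(\log x)^A$ is increasing for large $x$ and $b^q<N/q$, each $|E_r|$ is $\mathcal{O}\bigl((N/q)(\log(N/q))^{-A}\bigr)$, the lower endpoint $b^q$ contributing no larger order (for bounded $q$ it is simply $O(1)$ and harmless). Summing over all $k$ and using that the $Z_k$ partition the $\varphi(q(q-1))<q^2$ reduced residues then yields
\[
\sum_{k=0}^{q-1}|\epsilon_k|\le\sum_{r\in\mathbb{Z}^*_{q(q-1)}}|E_r|=\mathcal{O}\!\left(\frac{qN}{(\log(N/q))^{A}}\right).
\]

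The hard part is to reconcile this with the target $\mathcal{O}(N/(q^2\log N))$, which the bound does not give on its face. The key observation is that $F_{q,N}\ne\emptyset$ requires $b^q<N/q$, which forces $q=\mathcal{O}(\log N)$; when $F_{q,N}=\emptyset$ the lemma holds trivially with every $\epsilon_k=0$. Under this constraint $\log(N/q)\ge\tfrac12\log N$ for large $N$, so $q^3\log N=\mathcal{O}((\log N)^4)$ while $(\log(N/q))^A=\Omega((\log N)^A)$; choosing $A=4$ (or larger) makes $qN/(\log(N/q))^A\le CN/(q^2\log N)$, as required. I would also verify that the modulus $q(q-1)=\mathcal{O}((\log N)^2)$ remains within the admissible range of (\ref{siegel}) for this $A$, which again follows from the same logarithmic bound on $q$.
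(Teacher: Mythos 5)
Your proposal is correct and follows essentially the same route as the paper: express $|F_k|$ via prime counts in the arithmetic progressions determined by $Z_k$, apply the Siegel--Walfisz estimate (\ref{siegel}) with $A=4$ at both endpoints $b^q$ and $N/q$, bound the total error using $\sum_k|Z_k|=\varphi(q(q-1))\le q^2$, and convert $(\log(N/q))^{-4}$ into the target $q^{-2}(\log N)^{-1}$ bound via $q=\mathcal{O}(\log(N/q))$ and $\log(N/q)\ge\tfrac12\log N$. Your write-up is in fact slightly more careful than the paper's (summing over individual residues $r\in Z_k$ rather than multiplying a single count by $|Z_k|$, and disposing of the trivial case $F_{q,N}=\emptyset$ explicitly), but these are refinements of the same argument rather than a different one.
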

\begin{proof}
Observe that
$$
|F_k|=|Z_k|\left(\pi(N;q(q-1),r)-\pi(b^q,q(q-1),r)\right).
$$
Taking $A=4$ in (\ref{siegel}), we obtain
\begin{eqnarray}
    \nonumber
    |F_k|&=&\frac{|Z_k|}{\varphi(q(q-1))}\left(\pi(N/q)-\pi(b^q)\right)+\epsilon_k\\
    \nonumber
    &=&\frac{|Z_k|}{\varphi(q(q-1))}|F_{q,N}|+\epsilon_k,
\end{eqnarray}
where 
\begin{eqnarray}
    \nonumber
    |\epsilon_k|&\le& C|Z_k|\left(\frac{N}{q(\log(N/q))^4}+\frac{b^q}{q^4}\right)\le 2C|Z_k|\frac{N}{q(\log(N/q))^4}
\end{eqnarray}
since $x\mapsto x/(\log(x))^4$ is increasing. Note that
\begin{equation}
    \nonumber
    \sum_{k=0}^{q-1}|\epsilon_k|\le\frac{2CN}{q(\log(N/q))^4}\sum_{k=0}^{q-1}|Z_k|=\frac{2CN}{q(\log(N/q))^4}\varphi(q(q-1))\le\frac{2CNq^2}{q(\log(N/q))^4}   
\end{equation}
Since $b^q<N/q$, we have $q\le\log(N/q)$. Furthermore, $q^2< qb^q< N$, so $N/q>\sqrt{N}$. Consequently,
$$
    \frac{1}{\log(N/q)}\le\frac{1}{q}\quad\text{and}\quad \frac{\log(N)}{2}\le\log(N/q)\le\log(N)
$$
and therefore
\begin{equation}
    \nonumber
    \sum_{k=0}^{q-1}|\epsilon_k|\le\frac{2CNq^2}{q(\log(N/q))^4}\le\frac{4CN}{q^2\log(N)}.
\end{equation}
\end{proof}
Denote by
\begin{equation}
    \nonumber
    n(q,N)=|F_{q,N}|
\end{equation}
\begin{lem} 
There exists an absolute constant $C$ such that for any $q\in\mathbb{P}$, $N>b^q$, there holds
\label{discrepancyLemma}
    \begin{equation}
        \label{mainDiscrEstimate}
        n(q,N)D^*(\mathcal{S}_b(\mathcal{F}_{q,N}))\le C\left(\frac{\log\log(q)}{\log(q)}n(q,N)+\frac{N}{q^2\log(N)}\right).
    \end{equation}
\end{lem}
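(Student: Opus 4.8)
The plan is to reduce the discrepancy of the genuine sequence $\mathcal{S}_b(\mathcal{F}_{q,N})$ to that of an explicit sequence supported on the grid $\{k/q : 0\le k\le q-1\}$, and then invoke Lemma \ref{discreteLemma}. First I would observe that $F_0,F_1,\ldots,F_{q-1}$ partition $F_{q,N}$: every $p\in F_{q,N}$ satisfies $p>b^q>q(q-1)$, so $\gcd(p,q(q-1))=1$ and hence $p\bmod q(q-1)$ lies in exactly one $Z_k$. For $p\in F_k$, estimate (\ref{pqEstimate}) gives $\left|\frac{b^{pq}\bmod pq}{pq}-\frac{k}{q}\right|<\frac1q$. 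Let $\tilde S$ be the sequence obtained from $\mathcal{S}_b(\mathcal{F}_{q,N})$ by replacing each such point with $k/q$. Then Lemma \ref{continuity} (with $\epsilon=1/q$) yields $|D^*(\mathcal{S}_b(\mathcal{F}_{q,N}))-D^*(\tilde S)|<1/q$, so
$$
n(q,N)D^*(\mathcal{S}_b(\mathcal{F}_{q,N}))\le n(q,N)D^*(\tilde S)+\frac{n(q,N)}{q}.
$$

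Next I would apply Lemma \ref{discreteLemma} to $\tilde S$ with $R=q$ and $M=n(q,N)$. The multiplicity of the grid point $k/q$ is exactly $|F_k|$, which Lemma \ref{numberLemma} writes as $\alpha_kM+\epsilon_k$ with $\alpha_k=|Z_k|/\varphi(q(q-1))$ and $\sum_k|\epsilon_k|\le CN/(q^2\log N)$. The hypotheses of Lemma \ref{discreteLemma} are checked using Proposition \ref{countProp}: it gives $\alpha_1=\cdots=\alpha_{q-1}$, and since $\sum_k|Z_k|=(q-1)m_b(q)+(q-1)\bigl(\varphi(q-1)-m_b(q)\bigr)=(q-1)\varphi(q-1)=\varphi(q(q-1))$, we obtain $\sum_k\alpha_k=1$. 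The lemma then gives $M D^*(\tilde S)\le\max\{\alpha_0 M,M/q\}+CN/(q^2\log N)$, where $\alpha_0=|Z_0|/\varphi(q(q-1))=m_b(q)/\varphi(q-1)$.

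The crux is to bound $\alpha_0=m_b(q)/\varphi(q-1)$ by $\mathcal{O}(\log\log q/\log q)$; this is where the improvement lives. Writing $d=\mathrm{ord}_q(b)$, the definition of $m_b(q)$ shows it counts residues $r\equiv1\pmod d$ in $[1,q-1]$, of which there are at most $(q-1)/d$, so $m_b(q)\le(q-1)/d$. The essential number-theoretic point is that $d$ cannot be small: since $q\mid b^d-1$ we have $b^d\ge q+1$, hence $d>\log q/\log b$. Combining this with the classical totient bound $\varphi(q-1)\gg(q-1)/\log\log q$ gives
$$
\alpha_0=\frac{m_b(q)}{\varphi(q-1)}\ll\frac{(q-1)/d}{(q-1)/\log\log q}=\frac{\log\log q}{d}\ll\frac{\log b\,\log\log q}{\log q}.
$$
Since also $1/q\ll\log\log q/\log q$, substituting these bounds into the estimates above yields $n(q,N)D^*(\mathcal{S}_b(\mathcal{F}_{q,N}))\ll\frac{\log\log q}{\log q}n(q,N)+\frac{N}{q^2\log N}$, which is (\ref{mainDiscrEstimate}); the implied constant depends only on $b$. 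The finitely many degenerate small primes (where $\log\log q\le0$) are absorbed into the second term via the trivial bound $D^*\le1$.

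The main obstacle is exactly the third paragraph: the entire sharpening over (\ref{discrEstimate1}) rests on converting the lower bound $\mathrm{ord}_q(b)>\log_b q$ into decay of $\alpha_0$ through the totient estimate. Everything else is structural bookkeeping to cast $\mathcal{S}_b(\mathcal{F}_{q,N})$ into the discretized form demanded by Lemma \ref{discreteLemma}.
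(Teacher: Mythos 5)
Your proposal is correct and takes essentially the same route as the paper's own proof: discretize $\mathcal{S}_b(\mathcal{F}_{q,N})$ onto the grid $\{k/q:0\le k\le q-1\}$ via (\ref{pqEstimate}) and Lemma \ref{continuity}, apply Lemma \ref{discreteLemma} with the counts from Lemma \ref{numberLemma} and Proposition \ref{countProp}, and bound $\alpha_0=m_b(q)/\varphi(q-1)$ using $m_b(q)\le(q-1)/\mathrm{ord}_q(b)$, $\mathrm{ord}_q(b)\gg\log q$ and $\varphi(q-1)\gg(q-1)/\log\log q$. The only differences are presentational: you spell out details the paper leaves implicit, namely why the $F_k$ partition $F_{q,N}$, the derivation of $\mathrm{ord}_q(b)>\log q/\log b$ from $q\mid b^{\mathrm{ord}_q(b)}-1$, and the handling of the finitely many small primes $q$.
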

\begin{proof}
By (\ref{pqEstimate}), for any $p>b^q$ there is a $k\in\{0,1,\ldots,q-1\}$ such that
\begin{equation}
    \label{diff}
    \left|\frac{b^{pq}\pmod{pq}}{pq}-\frac{k}{q}\right|<\frac{1}{q}
\end{equation}
holds. Furthermore, for a specific $k$ the estimate  (\ref{diff}) holds if and only if $p\equiv r\pmod{q(q-1)}$ where $r\in Z_k$.

For $p\in F_{q,N}$ we define $a_p=k/q$ if $p\in F_k$. Set $S'=\{a_p:p\in F_{q,N}\}$. Then $|S'|=|F_{q,N}|$ and
\begin{equation}
    \nonumber
    \left|\frac{b^{pq}\pmod{pq}}{pq}-a_p\right|<\frac{1}{q}
\end{equation}
for each $p\in F_{q,N}$. By Lemma \ref{continuity}, there holds
\begin{equation}
    \label{mainLemmaEq1}
    D^*(S')-\frac{1}{q}<D^*(\mathcal{S}_b(\mathcal{F}_{q,N})<D^*(S')+\frac{1}{q}
\end{equation}
We shall now use Lemma \ref{discreteLemma} compute $D^*(S')$.
Set $\alpha_k=|Z_k|/\varphi(q(q-1))$, so $\sum\alpha_k=1$
and this, together with Lemma \ref{numberLemma}, implies that we may apply Lemma \ref{discreteLemma} to conclude
\begin{equation}
    \nonumber
    n(q,N)D^*(S')\le\max\left\{\frac{|Z_0|n(q,N)}{\varphi(q(q-1))},\frac{n(q,N)}{q}\right\}+\frac{CN}{q^2\log(N)}
\end{equation}
Further, we have
\begin{equation}
    \nonumber
    \frac{|Z_0|}{\varphi(q(q-1))}=\frac{(q-1)m_b(q)}{(q-1)\varphi(q-1)}=\frac{|\mathcal{N}|}{\varphi(q-1)}.
\end{equation}
Clearly,
$$
|\mathcal{N}|\le\frac{q-1}{{\rm ord}_q(b)}
$$
and it is well-known that
$$
\varphi(q-1)\ge\frac{C(q-1)}{\log\log(q-1)}
$$
Taking into consideration ${\rm ord}_q(b)\ge C\log(q)$, we get
\begin{eqnarray}
    \nonumber
    n(q,N)D^*(S')&\le&\max\left\{\frac{Cn(q,N)\log\log(q)}{\log(q)},\frac{n(q,N)}{q}\right\}+\frac{CN}{q^2\log(N)}\\
    \label{mainLemmaEq2}
    &=&\frac{Cn(q,N)\log\log(q)}{\log(q)}+\frac{CN}{q^2\log(N)}
\end{eqnarray}
By (\ref{mainLemmaEq1}) and (\ref{mainLemmaEq2}), we get
\begin{equation}
    \nonumber
    n(q,N)D^*(\mathcal{S}_b(\mathcal{F}_{q,N})\le \frac{Cn(q,N)\log\log(q)}{\log(q)}+\frac{CN}{q^2\log(N)},
\end{equation}
concluding the proof of (\ref{mainDiscrEstimate}).

\end{proof}

\begin{proof}[Proof of Theorem \ref{discrepancyThm}]
    Fix $N>N_0$ and set $M=|\mathcal{A}_N|$, it was shown in \cite{CKLRS} that
    \begin{equation}
        \label{Msize}
        M\sim\frac{N\log\log\log(N)}{\log(N)}.
    \end{equation}
    (We write $A\sim B$ if $c_1A\le B\le c_2A$ for absolute constants $c_1,c_2$.)
    Using Lemma \ref{triangleIneq} and Lemma \ref{discrepancyLemma}, we obtain
    \begin{eqnarray}
        \nonumber
        MD^*(\mathcal{S}_b(\mathcal{A}_N))&\le&\sum_{q\in\mathbb{P}}n(q,N)D^*(\mathcal{S}_b(\mathcal{F}_{q,N}))\\
        \label{discrEstProof1}
        &\le&C\sum_{q\in\mathbb{P}}\left(\frac{\log\log(q)}{\log(q)}n(q,N)+\frac{N}{q^2\log(N)}\right),
    \end{eqnarray}
    where $n(q,N)=0$ if $F_{q,N}=\emptyset$.
    By the prime number theorem
    \begin{equation}
        \label{discrEstProof2}
        n(q,N)\le\pi(N/q)=\frac{N}{q\log(N/q)}+\mathcal{O}\left(\frac{N}{q(\log(N/q))^2}\right)
    \end{equation}
    Using (\ref{discrEstProof1}), (\ref{discrEstProof2}) and the fact that $\log(N)/2\le \log(N/q)\le\log(N)$ for every $q\in\mathbb{P}$ with $n(q,N)>0$, we get
    \begin{eqnarray}
        \label{discrEstProof3}
        MD^*(\mathcal{S}_b(\mathcal{A}_N))&\le& 
        \frac{CN}{\log(N)}\sum_{q\in\mathbb{P}}\left(\frac{\log\log(q)}{q\log(q)}\left(1+\mathcal{O}\left(\frac{1}{\log(N)}\right)\right)+\frac{1}{q^2}\right).
    \end{eqnarray}
    Since the series $\sum_{q\in\mathbb{P}}\log\log(q)/(q\log(q))$ and $\sum_{q\in\mathbb{P}} 1/q^2$ both are convergent, it follows from (\ref{discrEstProof3}) that
    \begin{equation}
        \label{discrEstProof4}
        MD^*(\mathcal{S}_b(\mathcal{A}_N))\le\frac{CN}{\log(N)}\sum_{q\in\mathbb{P}}\left(\frac{\log\log(q)}{q\log(q)}+\frac{1}{q^2}\right)\le\frac{CN}{\log(N)}.
    \end{equation}
    From (\ref{discrEstProof4}) and (\ref{Msize}), we have
    $$
        D^*(\mathcal{S}_b(\mathcal{A}_N))=\mathcal{O}\left(\frac{1}{\log\log\log(N)}\right).
    $$
\end{proof}

{\bf Acknowledgements} The author is grateful to Professor A. Dubickas (Vilnius) for pointing out the references \cite{CKLRS, D2}.

\end{document}